\title{The Loop Murnaghan-Nakayama Rule}
\author{Dustin Ross}
\address{Dustin Ross, Colorado State University, Department of Mathematics, Fort Collins, CO 80523-1874, USA}
\email{ross@math.colostate.edu}
\newcommand{\N}{\mathbb{N}}
\newcommand{\Z}{\mathbb{Z}}
\newcommand{\C}{\mathbb{C}}
\newtheorem{dummy}{}[section]
\newtheorem{lemma}[dummy]{Lemma}
\newtheorem*{theorem1}{Theorem 1}
\newtheorem*{theorem2}{Theorem 2}
\newtheorem*{theorem1'}{Theorem 1'}
\newtheorem*{theorem2'}{Theorem 2'}
\theoremstyle{definition}
\newtheorem{definition}[dummy]{Definition}
\newtheorem{example}[dummy]{Example}
\newtheorem{remark}[dummy]{Remark}
\begin{document}


\begin{abstract}
We give a combinatorial proof of a natural generalization of the Murnaghan-Nakayama rule to loop Schur functions.  We also introduce shifted loop Schur functions and prove that they satisfy a similar relation. 
\end{abstract}

\maketitle


\section{Introduction}\label{sec:intro}

\subsection{Statement of Results}

The Schur functions $s_\lambda(\mathbf{x})$ are a special class of power series defined in infinitely many variables $\mathbf{x}=(x_1,x_2,...)$ and indexed by partitions $\lambda$ (we refer the reader to Section \ref{sec:definitions} for a precise definition).  Schur functions are classically known to form an orthonormal, integral basis of the ring of symmetric functions and they have proven ubiquitous in many areas of mathematics.  

Another (rational) basis for the ring of symmetric functions is given by products of the power-sum functions $p_k(\mathbf{x})$.  The classical Murnaghan-Nakayama rule provides a simple way to write the symmetric function $p_ks_\lambda$ in the Schur basis:
\begin{equation}\label{classicalmn}
p_ks_\lambda=\sum_\sigma(-1)^{ht(\sigma\setminus\lambda)}s_\sigma
\end{equation}
where the sum is over all ways of adding a length $k$ \textit{border strip} to $\lambda$ and $ht$ is the height (i.e. the number of rows) of the border strip, minus $1$.

Loop Schur functions naturally generalize the combinatorial definition of Schur functions and have previously been studied in the context of loop symmetric functions (\cite{lp:tpilgi}).  Given a positive integer $n$, the loop Schur functions $s_{\lambda}[n]$ are power series in infinitely many variables $\{x_{i,j}:i\in \Z_n,j\in\N\}$ and indexed by partitions $\lambda$.  There is also a notion of \textit{loop power-sum functions} $p_k[n]$.  In Section \ref{sec:thm1} we provide a combinatorial proof for the natural generalization of the Murnaghan-Nakayama rule.

\begin{theorem1}\label{main}
\[
p_k[n]s_\lambda[n]=\sum_\sigma (-1)^{ht(\sigma\setminus\lambda)}s_\sigma[n]
\]
where the sum is over all ways of adding length $kn$ border strips to $\lambda$.
\end{theorem1}

By forgetting the index $i\in\Z_n$, Theorem 1 specializes to the classical Murnaghan-Nakayama rule and, to the best of our knowledge, our proof provides a \textit{new} combinatorial proof of the classical result.

For any $0\leq l< n$, we introduce in Section \ref{sec:definitions} the \textit{$l$-shifted loop Schur functions} $s_\lambda^l[n]$, a close variant of the loop Schur functions (in particular, $s_\lambda^0[n]=s_\lambda[n]$).  We prove the following identity in Section \ref{sec:theorem2}.

\begin{theorem2}\label{vanish}
For $l\neq 0$,
\[
0=\sum_\sigma (-1)^{ht(\sigma\setminus\lambda)}s^l_\sigma[n]
\]
where the sum is over all ways of adding length $kn$ border strips to $\lambda$.
\end{theorem2}

\subsection{Context and Motivation}

The motivation which led us to the study of loop Schur functions lies in geometry, particularly in the study of curves in complex threefolds.  Gromov-Witten (GW) and Donaldson-Thomas (DT) theory define invariants of complex threefolds which virtually count curves with prescribed incidence conditions (c.f. \cite{pt:wtcc} for an introductory survey of these and other curve counting theories).  It was conjecture in \cite{mnop:gwdt1,mnop:gwdt2} that GW and DT theory coincide for smooth threefolds (i.e. there is a change of variables which equates the corresponding generating functions).  The GW/DT correspondence was proven in \cite{moop:gwdtc} for the case of toric threefolds.  The proof in \cite{moop:gwdtc} used the presence of torus actions and Atiyah-Bott localization to reduce the correspondence to the level of the equivariant ``topological vertex'' - a formal generating function associated to each fixed point of the toric threefold along with an algorithm to recover the full GW or DT theory.

GW and DT theory have recently been defined for three dimensional orbifolds, i.e. spaces which are locally modeled by finite quotients of $\C^3$ (\cite{cr:ogwt,agv:gwtodms,bcy:otv}).  Moreover, the topological vertex algorithm has been generalized to three dimensional toric orbifolds in both GW theory (\cite{r:lgoa}) and DT theory (\cite{bcy:otv}).  In GW theory, the orbifold vertex is a generating function of abelian Hodge integrals, whereas in the DT case it is a generating function of colored plane partitions.  Naturally, one would expect a correspondence of these theories generalizing the correspondence in the smooth case.  The loop Murnaghan-Nakayama rule arose in the study of such a correspondence for the orbifold vertex associated to the threefold $A_{n}$ singularity: $\left[\C^3/\Z_{n+1}\right]$ where $\Z_{n+1}$ acts with weights $(1,-1,0)$.

In particular, the results of \cite{er:cpgisf} and \cite{bcy:otv} show that a certain specialization of the loop Schur functions are closely related to the Donaldson-Thomas vertex for the $A_n$ singularity (more specifially, a `one-leg' specialization of the DT vertex).  The corresponding GW orbifold vertex is determined by a certain set of bilinear relations developed in \cite{z:ologwv}.  In \cite{r:ggmv}, these relations are reduced to identities in the DT vertex which are specializations of Theorems 1 and 2.  Therefore, Theorems 1 and 2 conclude the proof of the orbifold GW/DT correspondence for this particular orbifold vertex.  From the one-leg vertex correspondence, it follows that the GW and DT theories agree for any local orbifold line with cyclic isotropy (\cite{r:ggmv}).  Generalizing this correspondence to the full three leg vertex is currently under investigation by the author and Zong and will ultimately require several new combinatorial identities involving loop Schur functions.

Loop Schur functions have independently been studied in \cite{lp:tpilgi}.  Lam and Pylyavskyy proved a number of interesting properties concerning the loop Schur functions.  In particular, they showed that loop Schur functions belong to the ring of loop symmetric functions, i.e. they are invariants of a birational symmetric group action on an appropriate polynomial ring.  In fact, Lam and Pylyavskyy have announced Theorem 1 in \cite{l:lsfafmp}, though their proof (which presumably uses very different techniques than ours) has not yet appeared in the literature.  For a very nice survey of these results including applications to networks on surfaces, total positivity, crystal graphs, and discrete integrable systems, see \cite{l:lsfafmp}.



It should also be mentioned that (some relative of) the loop Schur functions have been studied in \cite{er:cpgisf} and \cite{n:hfgt}.

\subsection{Outline of the Proofs}

Theorem 1 is proven as a corollary of the following identity
\begin{theorem1'}
For any $N\geq kn+l(\lambda)$, 
\[
p_{k,N}[n] x^\delta s_{\lambda,N}[n]=\sum(-1)^{ht(\sigma\setminus\lambda)}x^\delta s_{\sigma,N}[n]
\]
where
\[
x^\delta:=(x_{-1,1}\cdot...\cdot x_{-N,1})(x_{-2,2}\cdot...\cdot x_{-N,2})...(x_{-N,N}),
\]
$p_{k,N}[n]$, $s_{\lambda,N}[n]$ are defined by specializing $x_{i,j}=0$ if $j>N$, and the sum is over all ways of adding length $kn$ border strips to $\lambda$.

\end{theorem1'}

To prove Theorem 1', we begin in Section \ref{sec:inv1} by interpreting the product $x^\delta s_{\lambda,N}[n]$ combinatorially in a way which will be convenient for later arguments - the key tool is a sign-reversing involution which has previously been defined in \cite{ckr:cpeccdsf}.  In Section \ref{sec:master}, we define a master generating function $F_{\lambda,N}[n]$ for certain combinatorial gadgets closely related to those discussed in Section \ref{sec:inv1}.  In Sections \ref{sec:inv2} and \ref{sec:inv3} we define sign-reversing involutions on the terms in $F_{\lambda,N}[n]$ with the property that the sum of the weights of the fixed terms can be identified with the left and right-hand sides, respectively, of Theorem 1'.  This proves that both sides are equal to $F_{\lambda,N}[n]$, thus proving the theorem.

Theorem 2 follows quickly in Section \ref{sec:theorem2} using similar techniques.

\subsection{Acknowledgements} 
A great deal of gratitude is owed to my advisor, Renzo Cavalieri, for his expert guidance.  I am also grateful to Thomas Lam for bringing to my attention his work with Pavlo Pylyavskyy.  For invaluable conversations, suggestions, and encouragement, I am also indebted to R. Croke, T. Gern, P. Johnson M. Konvalinka, S. Kovacs, S. Mkrtchyan, K. Monks, E. Nelson, T. Penttila, C. Peterson, L. Serrano, and C. Strickland.

\section{Definitions and Notation}\label{sec:definitions}

In this section we make precise the objects which appeared in the statements of Theorems 1 and 2.  Before defining loop Schur functions, we begin by briefly recalling the classical Schur functions (see e.g. \cite{m:sfhp}).  Though originally defined as quotients of antisymmetric functions, Schur functions can be defined combinatorially as generating functions of semi-standard Young tableaux as we now describe.

To a partition $\lambda$ we can associate a Young diagram (which we also call $\lambda$), a northwest justified collection of boxes where the rows encode the sizes of the parts of $\lambda$.  For example, if $\lambda$ is the partition $(4,3,3,2)$, the associated Young diagram is:
\[
\lambda=\ydiagram{4,3,3,2}
\]

A \textit{tableau} of $\lambda$ is an assignment of positive integers to the boxes of $\lambda$.  A \textit{semi-standard Young tableau} (SSYT) of $\lambda$ is a numbering of the boxes so that numbers are weakly increasing left to right and strictly increasing top to bottom.  For each $\square\in\lambda$, we define the \textit{weight} $w(\square,T)$ to be the number appearing in that square.  To each tableau $T\in SSYT(\lambda)$ we can associate a monomial
\[
x^T:=\prod_{\square\in\lambda} x_{w(\square,T)}.
\]
For example, to the SSYT
\[
T= \begin{ytableau}
1 & 1 & 2 & 4\\
2 & 3 & 3\\
4 & 4 & 6\\
7 & 7
\end{ytableau}
\]
we associate the monomial $x^T=x_1^2x_2^2x_3^2x_4^3x_6x_7^2$.

The Schur functions can be defined by the rule
\[
s_\lambda:=\sum_{T\in SSYT(\lambda)}x^T.
\]

It is not obvious, but this definition of Schur functions coincides with the classical definition (c.f. \cite{m:sfhp} or \cite{ckr:cpeccdsf} for a combinatorial proof).

The power-sum functions are defined as
\[
p_k:=\sum_i x_i^k.
\]

The sum in the classical Murnaghan-Nakayama rule \eqref{classicalmn} is over all Young diagrams $\sigma\supset\lambda$ such that the complement is connected, contains $k$ boxes, and contains no $2\times 2$ square.  We say that $\sigma$ is obtained from $\lambda$ by adding a length $k$ \textit{border strip} and $ht(\sigma\setminus\lambda)$ is the number of rows the border strip occupies, minus $1$.

\subsection{Loop Schur Functions}\label{sec:csf} 

In the current paper, we study loop Schur functions which we now define.  For a positive integer $n$ and partition $\lambda$, the \textit{colored} Young diagram $(\lambda,n)$ is obtained by coloring the boxes of the Young diagram by their \textit{content} modulo $n$.  In other words if $\square$ is in the $i$th row and the $j$th column (row and column indexing begins with $1$), we color it $c(\square):=j-i \mod n$.  For example, if $\lambda=(4,3,3,2)$ and $n=3$, the colored Young diagram is given by
\begin{center}
\begin{ytableau}
*(yellow) & *(green) & *(white) & *(yellow)\\
*(white) & *(yellow) & *(green)\\
*(green) & *(white) & *(yellow)\\
*(yellow)& *(green)
\end{ytableau}
\end{center}
with
\[
0\leftrightarrow\begin{ytableau}*(yellow)\end{ytableau}, \hspace{1cm} 1\leftrightarrow\begin{ytableau}*(green)\end{ytableau}, \hspace{.5cm}\text{and}\hspace{.5cm} 2\leftrightarrow\begin{ytableau}*(white)\end{ytableau}.
\]

We let $\lambda[i]$ denote the collection of boxes with color $i$.  To each semi-standard Young tableau $T\in SSYT(\lambda,n)$, we associate a monomial in $n$ infinite sets of variables $\{x_{i,j}:i\in\Z_n,j\in\N\}$:
\begin{equation}\label{mon}
x^T:=\prod_{i=0}^{n-1}\prod_{\square\in\lambda[i]}x_{i,w(\square,T)}.
\end{equation}
For example, to the SSYT
\[
T=\begin{ytableau}
*(yellow) 1& *(green) 1& *(white) 2& *(yellow) 4\\
*(white) 2& *(yellow) 3& *(green) 3\\
*(green) 4& *(white) 4& *(yellow) 6\\
*(yellow) 7 & *(green) 7
\end{ytableau}
\]
we associate the monomial
\[
x^T=x_{0,1}x_{0,3}x_{0,4}x_{0,6}x_{0,7}x_{1,1}x_{1,3}x_{1,4}x_{1,7}x_{2,2}^2x_{2,4}.
\]
\begin{definition}
The \textit{loop Schur function} associated to $(\lambda,n)$ is defined by
\[
s_{\lambda}[n]:=\sum_{T\in SSYT(\lambda,n)}x^T.
\]
\end{definition}

Power-sum functions also naturally generalize to the colored setting.

\begin{definition}
The \textit{loop power-sum functions} are defined by
\[
p_k[n]:=\sum_j\left( \prod_{i=0}^{n-1}x_{i,j} \right)^k.
\]
\end{definition}

\begin{remark}\label{spec1}
By definition we have the following specializations:
\[
p_k[n]|_{(x_{i,j}=x_j)}=p_{kn} \text{ and }  s_{\lambda}[n]|_{(x_{i,j}=x_j)}=s_\lambda.
\]
It follows immediately that Theorem 1 specializes to the classical identity \eqref{classicalmn} by forgetting the index $i$.
\end{remark}

\subsection{Shifted Loop Schur Functions}\label{sec:scsf}

To define the $l$-shifted loop Schur functions appearing in Theorem 2, we define the shifted weight 
\[
w^l(\square,T):=w(\square,T)+\frac{l\cdot c(\square)}{n}
\] 
and the corresponding monomial
\begin{equation}\label{shiftmon}
x^{T,l}:=\prod_{i=0}^{n-1}\prod_{\square\in\lambda[i]}x_{i,w^l(\square,T)}
\end{equation}
where the variables appearing in the monomial now belong to the set $\{x_{i,j}:i\in\Z_n,j\in\frac{1}{n}\Z\}$.

\begin{definition} The \textit{$l$-shifted loop Schur function} associated to $(\lambda,n)$ is defined by
\[
s_\lambda^l[n]:=\sum_{T\in SSYT(\lambda,n)}x^{T,l}.
\]
\end{definition}

\begin{remark}
By definition, $s_\lambda^0[n]=s_\lambda[n]$.
\end{remark}

\section{Proof of Theorem 1}\label{sec:thm1}

\subsection{Involutions: Round One}\label{sec:inv1}

In this section we give a combinatorial description of the product $x^\delta s_{\lambda,N}[n]$ which will prove useful in later arguments.  For a given Young diagram $\lambda$, and positive integers $n$ and $N>l(\lambda)$, define $\hat\lambda$ to be the diagram obtained by adding adding a staircase of size $N$ to the left of $\lambda$.  In other words, we add $N-i+1$ boxes to the left of the $i$th row of $\lambda$ (if $i>l(\lambda)$, the right edge of the new boxes should be justified with the right edge of the new boxes in the rows above it).  As before, the diagram is colored by content modulo $n$.  Consider pairs $(T,\tau)$ where
\begin{enumerate}[(i)]
\item $T$ is a tableau (not necessarily semi-standard) of $\hat\lambda$, and
\item $\tau=(\tau_1,...,\tau_N)$ is a labeling of the $N$ rows of $\hat\lambda $ with the numbers $1,...,N$ (considered as a permutation $\begin{pmatrix} 1 & 2 & ... & N \\ \tau_1 & \tau_2 & ... & \tau_N\\ \end{pmatrix}\in S_N$). 
\end{enumerate}

Let $\mathcal{T}_{\lambda,n,N}$ be the set of such pairs $(T,\tau)$ which satisfy the following conditions:
\begin{enumerate}[(i)]
\item $T$ only contains the numbers $1,...,N$.
\item The rows of $T$ are weakly increasing.
\item The leftmost entry in the $j$th row is at least $\tau_j$.
\end{enumerate}

\begin{remark}
When confusion does not arise, we omit the subscripts and write $\mathcal{T}=\mathcal{T}_{\lambda,n,N}$.
\end{remark}

\begin{example}\label{exi1}
For $\lambda=(2,1)$, $n=3$, and $N=5$, we give two examples of elements in $\mathcal{T}$.
\[
\begin{ytableau}
*(green)2 	&*(white)2	&*(yellow)3	&*(green)4	&*(white)4	&*(yellow)4	&*(green)5	& \none 	&\none[2] 	\\
\none 		&*(green)5	&*(white)5	&*(yellow)5	&*(green)5	&*(white)5	& \none		& \none		&\none[5]	\\		
\none		& \none		&*(green)4	&*(white)4	&*(yellow)5	& \none		& \none		& \none		&\none[4]	\\
\none		& \none		& \none		&*(green)2	&*(white)2	& \none		& \none		& \none		&\none[1]	\\
\none		& \none		& \none		& \none		&*(green)3	& \none		& \none		& \none		&\none[3]	\\	
\end{ytableau}
\hspace{2cm}
\begin{ytableau}
*(green)2 	&*(white)2	&*(yellow)3	&*(green)4	&*(white)4	&*(yellow)4	&*(green)5	& \none 	&\none[2] 	\\
\none 		&*(green)4	&*(white)4	&*(yellow)5	&*(green)5	&*(white)5	& \none		& \none		&\none[4]	\\		
\none		& \none		&*(green)5	&*(white)5	&*(yellow)5	& \none		& \none		& \none		&\none[5]	\\
\none		& \none		& \none		&*(green)2	&*(white)2	& \none		& \none		& \none		&\none[1]	\\
\none		& \none		& \none		& \none		&*(green)3	& \none		& \none		& \none		&\none[3]	\\	
\end{ytableau}
\]
\end{example}

As in (\ref{mon}), we can associate to each $T$ a monomial $x^T$.  Let $(-1)^{\tau}$ denote the sign of the permutation $\tau$.  We have the following identity.

\begin{lemma}\label{lem1}
\[
x^\delta s_{\lambda,N}[n]=\sum_{(T,\tau)\in\mathcal{T}}(-1)^{\tau}x^T.
\]
\end{lemma}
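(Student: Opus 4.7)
The plan is to establish this identity via a sign-reversing involution on $\mathcal{T}$: the involution will pair up all terms with opposite signs except those corresponding to column-strict extensions of SSYTs of $\lambda$ together with $\tau = \mathrm{id}$, and the surviving fixed-point contribution will assemble into $x^\delta s_{\lambda,N}[n]$.

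First I would reframe the data: given $(T,\tau) \in \mathcal{T}$, extend $T$ to an augmented tableau $\tilde T$ on the shape obtained from $\hat\lambda$ by prepending one extra box at the far left of every row, placing $\tau_j$ in row $j$'s new box. Conditions (ii) and (iii) in the definition of $\mathcal{T}$ are exactly the assertion that every row of $\tilde T$ is weakly increasing, so membership in $\mathcal{T}$ is the same as giving the augmented shape a row-weakly-increasing filling.

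Next I would define the involution $\iota$ in the style of Lindstr\"om--Gessel--Viennot. Scanning $\tilde T$ from left to right, locate the first column $c$ and first pair of adjacent rows $j,j+1$ at which column-strictness fails, i.e.\ $\tilde T_{j,c} \ge \tilde T_{j+1,c}$. Resolve this violation by exchanging $\tilde T_{j,c-1}$ with $\tilde T_{j+1,c}$, propagating the swap leftward along the pair of rows as needed until a row-weakly-increasing configuration is restored; the propagation eventually reaches the label column and transposes $\tau_j \leftrightarrow \tau_{j+1}$. The net operation preserves the monomial $x^T$, reverses $\mathrm{sgn}(\tau)$, and is involutive.

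A fixed point of $\iota$ is a pair $(T,\tau)$ for which $\tilde T$ is column-strict throughout. Combining column-strictness with the bounds $1 \le \tilde T_{j,c} \le N$ and the weak-increasing rows, a cascading pigeonhole argument starting from the rightmost column of the added region forces $\tau = \mathrm{id}$ and $T_{j,c} = j$ at every box in $\hat\lambda \setminus \lambda$. The monomial contribution from the added region is therefore exactly $x^\delta$, while $T|_\lambda$ is an arbitrary SSYT of $\lambda$ with entries in $\{1,\ldots,N\}$, contributing $s_{\lambda,N}[n]$. Summing over fixed points gives the claimed identity.

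The main technical obstacle I anticipate is specifying the propagation rule for $\iota$ carefully enough that it is well-defined and genuinely involutive: adjacent rows of $\hat\lambda$ have different lengths, so the diagonal swap must cross the boundary between $T$-entries and $\tau$-labels in a controlled way, always landing back in $\mathcal{T}$. Handling this bookkeeping, together with verifying the cascading step that pins down the staircase filling on the added region, will be the crux of the argument.
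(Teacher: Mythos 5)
Your overall architecture is exactly that of the paper's proof (which follows the involution of \cite{ckr:cpeccdsf}): treat $\tau$ as a zeroth column via condition (iii), cancel terms by a sign-reversing diagonal-swap involution, and observe that the fixed points are column-strict, which forces $\tau=\mathrm{id}$ and the staircase filling $T_{j,c}=j$ on $\hat\lambda\setminus\lambda$, yielding $x^\delta s_{\lambda,N}[n]$. Your fixed-point analysis and the observation that diagonal swaps preserve $x^T$ (colors are constant along diagonals) are both correct and match the paper.

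The genuine gap is your selection rule: you scan \emph{left to right} and resolve the first (leftmost) strictness violation, whereas the paper's involution $I_1$ selects the \emph{rightmost} violating column and then the highest violating pair in it. This difference is not bookkeeping; leftmost selection breaks the argument in two ways. First, the diagonal swap modifies entries precisely in the columns weakly to the left of the chosen column $c$, so it can create new violations strictly left of $c$ (between rows $j+1,j+2$ or $j-1,j$); a second application of $\iota$ would then select a different pair, so $\iota^2\neq\mathrm{id}$. Second, for the swapped row $j+1$ to remain weakly increasing one needs the junction inequality $T_{j,c-1}\leq T_{j+1,c+1}$, which follows from $T_{j,c-1}\leq T_{j,c+1}<T_{j+1,c+1}$ only when column $c+1$ is \emph{non}-violating --- guaranteed if $c$ is the rightmost violating column, but not under leftmost selection, so your swapped pair may not even lie in $\mathcal{T}$. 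Your proposed repair, ``propagate leftward as needed until row-weak-increase is restored,'' does not fix this: a data-dependent stopping point is itself unstable under a second application, again destroying involutivity. With the paper's rightmost-then-highest rule and the full initial-segment diagonal swap, columns strictly right of $c$ are untouched, the selected violation persists after the swap (the new entry at $(j+1,c)$ is the old $T_{j,c-1}\leq T_{j,c}$), and the map is an honest sign-reversing, weight-preserving involution; everything else in your plan then goes through as written.
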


\begin{proof}
We consider a sign reversing involution which cancels pairs of terms in the sum.  We then identify the sum of the fixed terms as $x^\delta s_{\lambda,N}[n]$.  The involution we use is defined in \cite{ckr:cpeccdsf}, the setting here is only slightly different.  We include the details for completeness.

The involution $I_1$ is defined on a pair $(T,\tau)$ as follows:
\begin{enumerate}[(I)]
\item Look for the rightmost and then highest vertical domino $\tiny\ydiagram{1,1}$ such that the upper entry is at least the lower entry.
\item Swap every box to the left of the upper box in (I) with the box directly to its southeast.
\item Swap the elements of $\tau$ which index these two rows.
\end{enumerate}
Define $I_1(T,\tau)$ to be the new tableau and permutation obtained through this process.  We will often abuse notation and write $I_1(T,\tau)=(I_1(T),I_1(\tau))$ to reference the action on the tableau or the permutation alone.  See Example \ref{exi1} above for two elements of $\mathcal{T}$ which are interchanged by $I_1$.

First of all, $I_1(T,\tau)$ is an involution because the location of the domino in step (I) is preserved under the action.  It is easy to see that $x^T=x^{I_1(T)}$ since the involution moves entries along diagonals on which the colors are constant.  It is also easy to see that $(-1)^{\tau}=-(-1)^{I_1(\tau)}$ whenever $(T,\tau)$ is not fixed by $I_1$ because switching two elements of the labeling $\tau$ corresponds to multiplying the corresponding permutation by a transposition.  Therefore, we conclude that 
\[
\sum_{\mathcal{T}}(-1)^{\tau}x^T=\sum_{\mathcal{T}^{I_1}}(-1)^{\tau}x^T.
\]
where $\mathcal{T}^{I_1}$ is the set of elements in $\mathcal{T}$ which are fixed by $I_1$.

It is left to analyze $\mathcal{T}^{I_1}$.  If $(T,\tau)$ is fixed by $I_1$, then $T$ must be a column-strict tableau.  In particular, the column immediately to the left of $\lambda\subset\hat\lambda$ should read $1,...,N$ top to bottom.  In particular, this implies that the entries of $\hat\lambda\setminus\lambda$ must be $1$ in the first row, $2$ in the second row, etc. and $\tau$ is forced to be the identity.  The constraint imposed on the entries of $\lambda$ are simply that they form a semi-standard tableau.  The entries in $\hat\lambda\setminus\lambda$ contribute $x^\delta$ to each monomial $x^T$ and the sum over all semi-standard tableaux of $\lambda$ contributes $s_{\lambda,N}[n]$.

\end{proof}

\subsection{Master Generating Function}\label{sec:master}

In this section we define a master generating function $F_{\lambda,N}[n]$ which is shown in subsequent sections to equal both the left and right-hand sides of the identity in Theorem 1'.  To that end, we fix a partition $\lambda=(\lambda_1,...,\lambda_l)$, positive integers $n$ and $k$, and a positive integer $N$ satisfying $N\geq kn+l$.  For any $i\in\{1,...,N\}$, let $\hat\lambda_i$ be the diagram obtained by adding $kn$ boxes to the right of the $i$th row of $\hat\lambda$.  The combinatorial objects we want to consider are pairs $(T,\tau)$ where 
\begin{enumerate}[(i)]
\item $T$ is a tableau of the diagram $\hat\lambda_i$ for some $i$, and
\item $\tau=(\tau_1,...,\tau_N)$ is a labeling of the $N$ rows of $\hat\lambda_i$ with the numbers $1,...,N$ (considered as a permutation in $S_N$).   
\end{enumerate}
Let $\mathcal{S}_{\lambda,n,k,N}$ be the set of such tableaux which satisfy the same three conditions (i) - (iii) required of the set $\mathcal{T}$ in Section \ref{sec:inv1}.  

\begin{example}\label{exi2}
For $\lambda=(2,1)$, $n=3$, $k=1$, and $N=5$, we give two examples of elements in $\mathcal{S}$.
\[
\begin{ytableau}
*(green)2 	&*(white)2	&*(yellow)3	&*(green)4	&*(white)4	&*(yellow)4	&*(green)5	& \none 	&\none[2] 	\\
\none 		&*(green)5	&*(white)5	&*(yellow)5	&*(green)5	&*(white)5	& \none		& \none		&\none[5]	\\		
\none		& \none		&*(green)4	&*(white)4	&*(yellow)5	& \none		& \none		& \none		&\none[4]	\\
\none		& \none		& \none		&*(green)2	&*(white)2	&*(yellow)3	&*(green)4	&*(white)5	&\none[1]	\\
\none		& \none		& \none		& \none		&*(green)3	& \none		& \none		& \none		&\none[3]	\\	
\end{ytableau}
\hspace{2cm}
\begin{ytableau}
*(green)2 	&*(white)2	&*(yellow)3	&*(green)4	&*(white)4	&*(yellow)4	&*(green)5	& \none 	&\none[2] 	\\
\none 		&*(green)5	&*(white)5	&*(yellow)5	&*(green)5	&*(white)5	& \none		& \none		&\none[5]	\\		
\none		& \none		&*(green)4	&*(white)4	&*(yellow)5	& \none		& \none		& \none		&\none[4]	\\
\none		& \none		& \none		&*(green)4	&*(white)5	&\none		&\none		&\none		&\none[3]	\\
\none		& \none		& \none		& \none		&*(green)2	&*(white)2	&*(yellow)3	&*(green)3	&\none[1]	\\	
\end{ytableau}
\]
\end{example}

To each $(T,\tau)$, we assign a monomial $x^T$ as before.  We define the generating function $F_{\lambda,N}[n]$ by
\begin{equation}\label{master}
F_{\lambda,N}[n]:=\sum_{(T,\tau)\in\mathcal{S}}(-1)^{\tau}x^T.
\end{equation}

\subsection{Involutions: Round Two}\label{sec:inv2}

\begin{lemma}\label{lem2}
\[
F_{\lambda,N}[n]=p_{k,N}[n] x^\delta s_{\lambda,N}[n]
\]
\end{lemma}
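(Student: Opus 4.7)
The plan is to mimic the proof of Lemma 1: I would construct a sign-reversing involution $I_2$ on $\mathcal{S}$, analogous to $I_1$, and then identify the resulting fixed-point sum with $p_{k,N}[n] x^\delta s_{\lambda,N}[n]$.

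First I would define $I_2$: for $(T,\tau) \in \mathcal{S}$ of shape $\hat\lambda_i$, search for the rightmost and then highest ``bad pair'' of vertically adjacent boxes in which the upper entry is at least the lower entry. If one is found at column $c$ between rows $r$ and $r+1$, the involution swaps $(r,c') \leftrightarrow (r+1,c'+1)$ for each $c' \leq c$ for which both boxes exist in $\hat\lambda_i$, and transposes $\tau_r$ with $\tau_{r+1}$. Compared to the proof of Lemma 1, the only technical addition is that the shape $\hat\lambda_i$ has a bulge in row $i$, so the range of swap columns must be chosen to keep the result in $\mathcal{S}$ (weakly increasing rows and the leftmost-entry inequality preserved); these checks, together with the fact that $I_2$ is its own inverse, follow from the chosen bad pair being rightmost, essentially as in Lemma 1. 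The swaps move entries along diagonals of constant color, so $x^T$ is preserved, and $\tau$ is multiplied by a transposition, so its sign flips. This gives
\[
F_{\lambda,N}[n] = \sum_{(T,\tau)\in \mathcal{S}^{I_2}} \sign(\tau)\, x^T.
\]

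Next I would characterize the fixed points of $I_2$. A fixed point has $T$ column-strict on $\hat\lambda_i$. The cascading argument from the end of Lemma 1's proof (column $N$ of $\hat\lambda_i$ is forced to read $1,2,\ldots,N$ top to bottom, which then propagates down and to the left) shows that $\tau = \id$, the staircase portion of $T$ has row $j$ filled uniformly with $j$ (contributing $x^\delta$), and the $\lambda$ portion is an SSYT of $\lambda$ with entries in $\{1,\ldots,N\}$. The extra strip in row $i$ is then a weakly increasing sequence of $kn$ values in $\{1,\ldots,N\}$ satisfying column-strictness with row $i-1$ wherever row $i-1$ extends into the strip's columns.

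Finally I would identify the generating function of fixed points with $p_{k,N}[n] x^\delta s_{\lambda,N}[n]$. After extracting the $x^\delta$ factor from the staircase, this reduces to a Pieri-type identity: the sum, over $i$ and over loop SSYTs of the ``generalized shape'' obtained from $\lambda$ by appending $kn$ boxes to row $i$, of the associated loop monomials equals $p_{k,N}[n]\, s_{\lambda,N}[n]$. I expect this last identification to be the main obstacle, since a single monomial of $p_{k,N}[n]\, s_{\lambda,N}[n]$ may arise from several different combinations of (row $i$, SSYT $T_\lambda$, strip configuration) on the fixed-point side, so the matching is one of monomials rather than of combinatorial objects, and care is needed to verify that the column-strictness constraints between the strip and row $i-1$ produce exactly the right multiplicities.
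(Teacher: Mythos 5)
Your overall architecture (sign-reversing involution on $\mathcal{S}$, then identify the fixed-point sum) matches the paper, but the specific involution you propose --- a Bender--Knuth-style diagonal swap whose fixed points are column-strict tableaux on $\hat\lambda_i$ --- does not work, and the ``main obstacle'' you flagged at the end is fatal rather than technical. Take $n=k=1$, $\lambda=\emptyset$, $N=2$, and restrict attention to the shape $\hat\lambda_2$ (row $1$ in columns $1$--$2$, row $2$ in columns $2$--$3$). Direct enumeration shows the signed sum over the part of $\mathcal{S}$ supported on this shape is zero: the $\tau=\mathrm{id}$ terms contribute $x_2^2(x_1^2+x_1x_2+x_2^2)$ and the $\tau=(12)$ terms contribute exactly the negative of this. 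Yet the filling with row $1=(1,1)$, row $2=(2,2)$, $\tau=\mathrm{id}$ is column-strict, hence fixed by your $I_2$, with positive weight $x^\delta x_2$. Since your swaps never change the diagram, any involution of your type must preserve the per-shape signed sums, so no choice of swap rule can have your stated fixed-point set; summed over $i$, your fixed points give $x^\delta(x_1+2x_2)$ rather than $p_{1,2}x^\delta=x^\delta(x_1+x_2)$. The mechanism of failure is the bulge: when the bad pair lies between row $i-1$ and the longer row $i$, row $i$ overhangs row $i-1$, and the diagonal swap can break weak increase at the junction into the overhang --- e.g.\ row $1=(2,2)$, row $2=(1,1)$, $\tau=(12)$ has its bad pair in column $2$, and swapping produces row $2=(2,1)\notin\mathcal{S}$, leaving that element unpaired. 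Conceptually, a sign-free column-strict fixed-point set would encode a positive row-strip (Pieri-type) rule, which exists for $h_k$ but not for the power sum $p_k$; the multiplicity problem you anticipated cannot be resolved because the identity you would need is false.

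The paper's $I_2$ is of a completely different kind. Since the leftmost entry of row $i$ is at least $\tau_i$ and rows weakly increase, the $kn$th entry of row $i$ is some $l\geq\tau_i$. If $l=\tau_i$ (so the first $kn$ entries all equal $\tau_i$), the pair is fixed; otherwise one removes the $kn$-box prefix of row $i$, reinserts it as the prefix of the row $j$ with $\tau_j=l$, and swaps $\tau_i\leftrightarrow\tau_j$. Because every row of $\hat\lambda$ begins on the content-$0$ diagonal, the transported prefix keeps its colors, so $x^T$ is preserved while the sign flips, and the inequalities defining $\mathcal{S}$ hold at both junctions precisely because of how $l$ was chosen. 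Stripping the constant prefix from each fixed point gives a bijection $\mathcal{S}^{I_2}\cong\mathcal{T}\times\underline{N}$ carrying the weight factor $\left(\prod_{j=0}^{n-1}x_{j,\cdot}\right)^{k}$, whence the sum factors as $p_{k,N}[n]$ times the signed sum over $\mathcal{T}$ --- which is \emph{still signed}, and is identified with $x^\delta s_{\lambda,N}[n]$ simply by quoting Lemma \ref{lem1}. (In the toy example above, the column-strict filling that obstructed your argument is indeed a fixed point of the paper's $I_2$, but it cancels against the signed fixed point row $1=(2,2)$, row $2=(1,1)$, $\tau=(12)$ inside Lemma \ref{lem1}'s cancellation.) The idea you are missing is to postpone column-strictness entirely: peel off the $kn$-strip to manufacture the factor $p_{k,N}[n]$, and let Lemma \ref{lem1} do the rest.
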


\begin{proof}
We define an involution on the terms of $F_{\lambda,N}[n]$ which cancels terms in pairs.  The remaining terms are seen to coincide with the left-hand side of Theorem 1'.  We define the involution $I_2$ on sets of pairs $(T,\tau)$ as follows.  

If $T$ is a tableau of $\hat\lambda_i$ and the $kn$th entry of row $i$ is $\tau_i$, then define $I_2(T,\tau)=(T,\tau)$.  Otherwise, the $kn$th entry of row $i$ is $l$ with $l>\tau_i$ because of conditions (ii) and (iii) in Section \ref{sec:inv1}.  Then $I_2(T,\tau)$ is defined by the following process:
\begin{enumerate}[(I)]
\item Remove the first $kn$ boxes (along with their labels) of the $i$th row, and shift the remaining boxes in that row to the left by $kn$ units.
\item Interchange $\tau_i$ and $\tau_j$ where $j$ is the row with $\tau_j=l$.
\item Slide the boxes in row $j$ to the right by $kn$ units and reinsert the $kn$ boxes (along with their labels) in row $j$.
\end{enumerate}

Notice that when $I_2$ does not fix an element, it sends a tableau of $\hat\lambda_i$ to a tableau of $\hat\lambda_j$ with $j\neq i$.  We will again abuse notation and write $I_2(T,\tau)=(I_2(T),I_2(\tau))$.  See Example \ref{exi2} for an illustration of two elements of $\mathcal{S}$ which are interchanged by $I_2$.

It is easy to see that $I_2$ is an involution, $x^T=x^{I_2(T)}$,and if $(T,\tau)$ is not a fixed point of $I_2$, then $(-1)^{\tau}=-(-1)^{I_2(\tau)}$.  Therefore, the terms which are not fixed cancel in pairs in the sum (\ref{master}).

By definition, the terms which are fixed correspond to those where the $kn$ leftmost boxes in $\hat\lambda_i$ all contain the number $\tau_i$.  If we set $\underline{N}:=\{1,...,N\}$, then we obtain a bijection between the sets $\mathcal{S}^{I_2}$ and $\mathcal{T}\times\underline{N}$ by mapping $(T,\tau)$ to $(T',\tau,i)$ where $T'$ is obtained by removing the leftmost $kn$ boxes from row $i$ and sliding the remaining boxes to the left.  Moreover, this bijection preserves $(-1)^{\tau}$ and the weights are related by the equation $x^T=\left(\prod_{j=0}^{n-1}x_{j,i}\right)^kx^{T'}$.  We have
\begin{align*}
F_{\lambda,N}[n]&=\sum_{(T,\tau)\in\mathcal{S}}(-1)^{\tau}x^T\\
&=\sum_{(T,\tau)\in\mathcal{S}^{I_2}}(-1)^{\tau}x^T\\
&=\sum_{(T',\tau,i)\in\mathcal{T}\times\underline{N}}(-1)^{\tau}\left(\prod_{j=0}^{n-1}x_{j,i}\right)^kx^{T'}\\
&=\left( \sum_{i=1}^N\left(\prod_{j=0}^{n-1}x_{j,i}\right)^k \right)\left( \sum_{(T',\tau)\in \mathcal{T}}(-1)^{\tau}x^{T'} \right)\\
&=p_{k,N}[n] x^\delta s_{\lambda,N}[n]
\end{align*}
where the last equality follows from Lemma \ref{lem1} and the definition of the loop power-sum functions.
\end{proof}

\subsection{Involutions: Round Three}\label{sec:inv3}

\begin{lemma}\label{lem3}
\[
F_{\lambda,N}[n]=\sum(-1)^{ht(\sigma\setminus\lambda)}x^\delta s_{\sigma,N}[n]
\]
where the sum is over all ways of adding a length $kn$ border strip to $\lambda$.
\end{lemma}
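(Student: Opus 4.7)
The plan is to prove Lemma~\ref{lem3} using the same strategy as Lemmas~\ref{lem1} and~\ref{lem2}: I define a third sign-reversing, weight-preserving involution $I_3$ on $\mathcal{S}$ and identify its fixed points with the right-hand side. The combinatorial fact that powers this involution is a color-counting observation: reading a length-$kn$ border strip from head to foot, each successive box has content exactly one less than the previous (every step is either one box left or one box down), so the $kn$ contents are the consecutive integers $c, c-1, \ldots, c-kn+1$, hitting each residue modulo $n$ exactly $k$ times. A horizontal strip of length $kn$, like the one appended in forming $\hat\lambda_i$, has the same color multiset. Consequently, we can redistribute the appended $kn$ boxes among several rows in a border-strip pattern without altering the monomial $x^T$.

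Given $(T,\tau)\in\mathcal{S}$ with $T$ on $\hat\lambda_i$, I define $I_3$ by scanning the appended $kn$-strip for a canonical ``rightmost bending site''---a position at which a terminal segment of the strip can be transferred between row $i$ and an adjacent row below while preserving the row-weakly-increasing condition and the leftmost-entry constraints of $\mathcal{S}$. If such a site exists, $I_3$ performs the transfer and simultaneously transposes the two affected entries of $\tau$, flipping $sgn(\tau)$; by the color-matching fact above, the transferred boxes land at positions of the same color modulo $n$, so $x^T$ is preserved. That $I_3$ is an involution will be ensured by choosing the canonical site so that the swap re-identifies the same site in the resulting configuration, just as in the selection rules used to define $I_1$ and $I_2$.

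The fixed points are the configurations in which no further bending is possible. Each such fixed point will be canonically identified with an element of $\mathcal{T}_\sigma$ for a unique $\sigma$ obtained from $\lambda$ by adding a length-$kn$ border strip: the border strip records the rows into which the appended strip has been forced, and its height $ht(\sigma\setminus\lambda)$ equals the number of $\tau$-transpositions accumulated in arriving at the canonical form. The net sign is therefore $(-1)^{ht(\sigma\setminus\lambda)}$. Summing over all fixed points and applying Lemma~\ref{lem1} to each $\sigma$ then gives
\[
F_{\lambda,N}[n]=\sum_{\sigma}(-1)^{ht(\sigma\setminus\lambda)}x^\delta s_{\sigma,N}[n],
\]
as claimed. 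The hardest step will be specifying the canonical bending site so that $I_3$ is genuinely an involution and verifying that the accumulated $\tau$-swaps correctly match the border strip heights at every fixed point; once that bookkeeping is in place, the remaining verifications mirror those in the proofs of Lemmas~\ref{lem1} and~\ref{lem2}.
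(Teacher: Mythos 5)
There is a genuine gap: your proposal defers exactly the content of the lemma. The paper's proof does follow the broad strategy you describe (a sign-reversing, weight-preserving involution $I_3$ on $\mathcal{S}$ whose fixed points are matched with border-strip shapes $\sigma$), and your color-counting observation is correct and is implicitly what makes the paper's moves weight-preserving. But you never construct the ``canonical bending site,'' never verify the involution property, and never carry out the fixed-point identification --- and you say as much in your last paragraph. Those verifications are not routine bookkeeping that ``mirrors'' Lemmas \ref{lem1} and \ref{lem2}; they are the proof. In particular, a local transfer of a terminal segment of the strip between row $i$ and one adjacent row is a single two-row move, and it is not clear how repeated or canonical choices of such moves would produce, in one application of an involution, the cancellations needed when the border strip for $\sigma$ spans many rows, nor how you would handle the degenerate case in which $\hat\lambda_i$ has two rows of equal length (which the paper must treat separately, by swapping the entries of the two equal-length rows and transposing $\tau$).

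There is also a conceptual problem with your sign mechanism. Under an involution argument, a fixed point undergoes no swaps at all, so the sign $(-1)^{ht(\sigma\setminus\lambda)}$ attached to a fixed point cannot arise as ``the number of $\tau$-transpositions accumulated in arriving at the canonical form'' --- nothing is accumulated on a fixed point. In the paper, this sign comes from a separate weight-preserving bijection $f:\mathcal{S}_\lambda^{I_3}\rightarrow\coprod_\sigma\mathcal{T}_\sigma^{I_1}$: one slides the elongated row $i$ diagonally northwest past the rows above it until row lengths are strictly decreasing (a move along diagonals, hence color- and weight-preserving), which cyclically permutes $ht(\sigma\setminus\lambda)+1$ entries of $\tau$ and so costs exactly $(-1)^{ht(\sigma\setminus\lambda)}$; the involution $I_3$ itself is then defined by conjugating the already-established involution $I_1$ by this slide, so that column-strictness failures are cancelled by $I_1$ rather than by any new bending rule. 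Your plan would need to either reproduce this two-step structure (slide, then reuse $I_1$) or genuinely define a new local involution with the properties you assert; as written, neither is done, and the claim that fixed points biject with the data computing $x^\delta s_{\sigma,N}[n]$ via Lemma \ref{lem1} is asserted rather than proved.
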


\begin{proof}
We define a different involution on $\mathcal{S}$ which cancels terms in the sum $F_{\lambda,N}[n]$ in pairs.  The sum of the weights of the remaining terms is then seen to coincide with $\sum(-1)^{ht(\sigma\setminus\lambda)}x^\delta s_{\sigma,N}[n]$.  The involution $I_3$ is defined as follows.

First, if $(T,\tau)$ is a tableau on $\hat\lambda_i$ and two rows of $\hat\lambda_i$ have the same number of boxes, then one of those rows must be $i$, call the other one $j$ (it is not hard to see that at most two rows can have equal length).  Define $I_3(T,\tau)=(T^*,\tau^*)$ where $T^*$ is obtained by swapping the entries of rows $i$ and $j$ and $\tau^*$ is obtained by swapping $\tau_i$ and $\tau_j$.

\begin{example}
$I_3$ interchanges the following elements of $\mathcal{S}$.
\[
\begin{ytableau}
*(green)2 	&*(white)2	&*(yellow)3	&*(green)4	&*(white)4	&*(yellow)4	&*(green)5	& \none 	&\none[2] 	\\
\none 		&*(green)5	&*(white)5	&*(yellow)5	&*(green)5	&*(white)5	& \none		& \none		&\none[5]	\\		
\none		& \none		&*(green)4	&*(white)4	&*(yellow)5	& \none		& \none		& \none		&\none[4]	\\
\none		& \none		& \none		&*(green)2	&*(white)2	&*(yellow)3	&*(green)4	&*(white)5	&\none[1]	\\
\none		& \none		& \none		& \none		&*(green)3	& \none		& \none		& \none		&\none[3]	\\	
\end{ytableau}
\longleftrightarrow\hspace{.5cm}
\begin{ytableau}
*(green)2 	&*(white)2	&*(yellow)3	&*(green)4	&*(white)4	&*(yellow)4	&*(green)5	& \none 	&\none[2] 	\\
\none 		&*(green)2	&*(white)2	&*(yellow)3	&*(green)4	&*(white)5	& \none		& \none		&\none[1]	\\		
\none		& \none		&*(green)4	&*(white)4	&*(yellow)5	& \none		& \none		& \none		&\none[4]	\\
\none		& \none		& \none		&*(green)5	&*(white)5	&*(yellow)5	&*(green)5	&*(white)5	&\none[5]	\\
\none		& \none		& \none		& \none		&*(green)3	& \none		& \none		& \none		&\none[3]	\\	
\end{ytableau}
\]
\end{example}

If all rows of $\hat\lambda_i$ have distinct size, then $I_3(T,\tau)$ is obtained as follows:
\begin{enumerate}[(I)]
\item Slide the $i$th row of $\hat\lambda_i$ northwest until the length of the rows are strictly decreasing, slide $\tau_i$ upward with the row, call this new tableau $(T',\tau')$. 
\item Apply the involution $I_1$ from the proof of Lemma \ref{lem1} to $(T',\tau')$.
\item Reverse step (I).
\end{enumerate}

\begin{remark}
The important thing to notice is that the new diagram obtained in Step I can be identified with $\hat\sigma$ for some $\sigma$ which is obtained from $\lambda$ by adding a length $kn$ border strip.
\end{remark}

\begin{example}
This example illustrates the involution $I_3$.  The first diagram is $(T,\tau)$, the second is $(T',\tau')$, the third is $I_1(T',\tau')$, and the fourth is $I_3(T,\tau)$.
\begin{align*}
\begin{ytableau}
*(green)2 	&*(white)2	&*(yellow)3	&*(green)4	&*(white)4	&*(yellow)4	&*(green)4	& \none 	&\none[2] 	\\
\none 		&*(green)2	&*(white)2	&*(yellow)3	&*(green)4	&*(white)4	& \none		& \none		&\none[1]	\\		
\none		& \none		&*(green)4	&*(white)4	&*(yellow)5	&*(green)5	&*(white)5	&*(yellow)5	&\none[4]	\\
\none		& \none		& \none		&*(green)5	&*(white)5	& \none		& \none		& \none		&\none[5]	\\
\none		& \none		& \none		& \none		&*(green)3	& \none		& \none		& \none		&\none[3]	\\	
\end{ytableau}
\longrightarrow\hspace{.5cm}
&\begin{ytableau}
*(green)2 	&*(white)2	&*(yellow)3	&*(green)4	&*(white)4	&*(yellow)4	&*(green)4	& \none 	&\none[2] 	\\
\none		&*(green)4	&*(white)4	&*(yellow)5	&*(green)5	&*(white)5	&*(yellow)5	& \none		&\none[4]	\\
\none 		& \none		&*(green)2	&*(white)2	&*(yellow)3	&*(green)4	&*(white)4	& \none		&\none[1]	\\		
\none		& \none		& \none		&*(green)5	&*(white)5	& \none		& \none		& \none		&\none[5]	\\
\none		& \none		& \none		& \none		&*(green)3	& \none		& \none		& \none		&\none[3]	\\
\end{ytableau}\\
\longrightarrow \hspace{.5cm}
\begin{ytableau}
*(green)2 	&*(white)2	&*(yellow)3	&*(green)4	&*(white)4	&*(yellow)4	&*(green)4	& \none 	&\none[2] 	\\
\none		&*(green)2	&*(white)2	&*(yellow)3	&*(green)4	&*(white)4	&*(yellow)5	& \none		&\none[1]	\\
\none 		& \none		&*(green)4	&*(white)4	&*(yellow)5	&*(green)5	&*(white)5	& \none		&\none[4]	\\		
\none		& \none		& \none		&*(green)5	&*(white)5	& \none		& \none		& \none		&\none[5]	\\
\none		& \none		& \none		& \none		&*(green)3	& \none		& \none		& \none		&\none[3]	\\
\end{ytableau}
\longrightarrow\hspace{.5cm}
&\begin{ytableau}
*(green)2 	&*(white)2	&*(yellow)3	&*(green)4	&*(white)4	&*(yellow)4	&*(green)4	& \none 	&\none[2] 	\\
\none 		&*(green)4	&*(white)4	&*(yellow)5	&*(green)5	&*(white)5	& \none		& \none		&\none[1]	\\		
\none		& \none		&*(green)2	&*(white)2	&*(yellow)3	&*(green)4	&*(white)4	&*(yellow)5	&\none[4]	\\
\none		& \none		& \none		&*(green)5	&*(white)5	& \none		& \none		& \none		&\none[5]	\\
\none		& \none		& \none		& \none		&*(green)3	& \none		& \none		& \none		&\none[3]	\\	
\end{ytableau}
\end{align*}
\end{example}

As with the other involutions, it is easy to see that $I_3$ reverses the sign and preserves the weight for all elements $(T,\tau)\in\mathcal{S}$ which are not fixed.  The elements of $\mathcal{S}$ which are fixed by $I_3$ are those which get fixed by $I_1$ in step (II) above.  Therefore, step (I) above defines a map $f:\mathcal{S}_\lambda^{I_3}\rightarrow\coprod\mathcal{T}_\sigma^{I_1}$ where the union is over all $\sigma$ which are obtained by adding a length $kn$ border strip to $\lambda$.  The map $f$ is clearly invertible, so $f$ is a bijection.  The function $f$ preserves the weight but does not quite preserve the sign.  In fact, $f$ introduces a factor of $-1$ for every shift in step (I) (corresponding to multiplying $\tau$ by a transposition).  This introduces a factor of $(-1)^{ht(\sigma\setminus\lambda)}$.  Putting it all together, we have
\begin{align*}
F_{\lambda,N}[n]&=\sum_{(T,\tau)\in\mathcal{S}_\lambda}(-1)^{\tau}x^T\\
&=\sum_{(T,\tau)\in\mathcal{S}_\lambda^{I_3}}(-1)^{\tau}x^T\\
&=\sum_\sigma(-1)^{ht(\sigma\setminus\lambda)}\sum_{(T',\tau')\in\mathcal{T}_\sigma^{I_1}}(-1)^{\tau}x^T\\
&=\sum_\sigma(-1)^{ht(\sigma\setminus\lambda)}x^\delta s_{\sigma,n}[N]
\end{align*}
where the last equality follows from Lemma \ref{lem1}.
\end{proof}

Lemmas \ref{lem2} and \ref{lem3} complete the proof of Theorem 1'.  Dividing both sides by $x^\delta$ and taking $N\rightarrow\infty$ proves Theorem 1.

\section{Proof of Theorem 2}\label{sec:theorem2}

In order to prove Theorem 2, define the degree of the variable $x_{i,j}$ to be $j$.  Then Theorem 2 follows from the next result by taking $N\rightarrow\infty$.

\begin{theorem2'}
The leading term of 
\[
\sum(-1)^{ht(\sigma\setminus\lambda)} s_{\sigma,N}^l[n]
\]
has degree bounded below by $N-kn-\frac{l}{n}N$.
\end{theorem2'}

\begin{proof}

We define the generating function $F_{\lambda,N}^l[n]$ exactly as we defined $F_{\lambda,N}[n]$ above, except we use the shifted weight defined in \eqref{shiftmon}.  Since the involution $I_3$ preserves the shifted weight (it only moves boxes along diagonals), Lemma \ref{lem3} carries through unchanged and proves that
\[
F_{\lambda,N}^l[n]=x^{\delta,l}\sum(-1)^{ht(\sigma\setminus\lambda)} s_{\sigma,N}^l[n]
\]
where $x^{\delta,l}$ is the shifted monomial associated to the \textit{standard} tableau on $\hat\emptyset$:
\[
\begin{ytableau}
*(green)1 	&*(white)1	&*(yellow)1	&*(green)1	&*(white)1	\\
\none 		&*(green)2	&*(white)2	&*(yellow)2	&*(green)2	\\		
\none		& \none		&*(green)3	&*(white)3	&*(yellow)3	\\
\none		& \none		& \none		&*(green)4	&*(white)4	\\
\none		& \none		& \none		& \none		&*(green)5	\\	
\end{ytableau}
\]

\begin{remark}\label{leadterm}
It is easy to see that $x^{\delta,l}$ has the smallest degree of any tableau on $\hat\emptyset$ which weakly increases along rows.
\end{remark}

Define $\mathcal{S}'\subset \mathcal{S}$ to be the subset of $\mathcal{S}$ consisting of tableaux of $\hat\lambda_i$ where the entries in the $i$th row do not exceed $N-kl$.  We define an involution $I_4$ on the elements of $\mathcal{S}'$ as follows.
\begin{enumerate}[(I)]
\item Remove the first $kn$ boxes from row $i$, slide the remaining boxes $kn$ units to the left and add $kl$ to each remaining entry.
\item If $m$ is the rightmost entry of the boxes which were removed in (I) ($m\leq N-kl$ by definition of $\mathcal{S}'$), subtract $kl$ from from each entry of row $j$ where $\tau_j=m+kl$, and then slide them to the right by $kn$ units and insert the boxes removed in (I).
\item Switch $\tau_i$ and $\tau_j$.
\end{enumerate}

Clearly $I_4$ is sign reversing and it preserves weight (this is the reason for adding/subtracting $kl$ to the entries when we slide them).  Therefore,
\[
F_{\lambda,N}^l[n]=\sum_{(T,\tau)\in\mathcal{S}\setminus\mathcal{S}'}(-1)^{\tau}x^{T,l}.
\]
But the rightmost entry of the $i$th row of every tableau in $\mathcal{S}\setminus\mathcal{S}'$ is at least $N-kl$ and this contributes at least $N-kl-\frac{l}{n}N$ to the degree of the associated monomial.  This implies that the degree of the associated monomial is at least $deg(x^{\delta,l})+N-kl-\frac{l}{n}N$.  Therefore, the degree of the leading term of $F_{\lambda,N}^l[n]$ (and hence $x^{\delta,l}\sum(-1)^{ht(\sigma\setminus\lambda)} s_{\sigma,N}^l[n]$) is at least $deg(x^{\delta,l})+N-kl-\frac{l}{n}N$.  Dividing by $x^{\delta,l}$ proves the theorem. 

\end{proof}

\bibliographystyle{alpha}

\end{document}